\providecommand{\keywords}[1]{\textbf{\textit{keywords}} #1}
\theoremstyle{definition}
\newtheorem{theorem}{Theorem}[section]
\newtheorem{lemma}[theorem]{Lemma}
\theoremstyle{definition}
\newcommand\M{\ensuremath{\textbf{M}}}
\newcommand\D{\ensuremath{\textbf{D}}}
\newcommand\K{\ensuremath{\textbf{K}}}
\newcommand\F{\ensuremath{\textbf{F}}}
\newcommand\x{\ensuremath{\textbf{x}}}
\newcommand\y{\ensuremath{\textbf{y}}}
\newcommand\tu{\ensuremath{\textbf{u}}}
\newcommand\G{\ensuremath{\textbf{G}}}
\newcommand\z{\ensuremath{\textbf{z}}}
\theoremstyle{remark}
\title{Iterative Rational Krylov Algorithms for model reduction of a class of constrained structural dynamic system with Engineering applications}
\author[]{Xin Du\thanks{ School of Mechatronic Engineering and Automation, Shanghai University, Shanghai-200072, China
 and   
Key Laboratory of Modern Power System Simulation and Control \& Renewable Energy Technology, Ministry of Education(Northeast Electric Power University), Jilin-132012, China
, {duxin@shu.edu.cn}}}
\author[]{M. Monir Uddin\thanks{Department of Mathematics and Physics, North south University, Dhaka-1229, Bangladesh, {monir.uddin@northsouth.edu}}}
\author[]{A. Mostakim Fony\thanks{Department of Mathematics, Chittagong University,  Chittagong, Bangladesh, {asibmostakim1995@gmail.com}}}
\author[]{Md. Tanzim Hossain\thanks{Department of Electrical and Computer Engineering, North South University, Dhaka-1229, Bangladesh, {tanzim.hossain@northsouth.edu}}}
\author[]{Md. Nazmul Islam Shuzan\thanks{Department of Electrical and Computer Engineering, North South University, Dhaka-1229, Bangladesh, {nazmul.shuzan@northsouth.edu }}}
\affil[]{}
\date{}
\begin{document}
\maketitle
\begin{abstract}
This paper discusses model order reduction of large sparse second-order index-3 differential algebraic equations (DAEs) by applying Iterative Rational Krylov Algorithm (IRKA). In general, such DAEs arise in constraint mechanics, multibody dynamics, mechatronics and many other branches of sciences and technologies. By deflecting the algebraic equations the second-order index-3 system can be altered into an equivalent standard second-order system. This can be done by projecting the system onto the null space of the constraint matrix. However, creating the projector is computationally expensive and it yields huge bottleneck during the  implementation. This paper shows how to find a reduce order model without projecting the system onto the  null space of the constraint matrix explicitly. To show the efficiency of the theoretical works we apply them to several data of  second-order index-3 models and experimental  resultants are discussed in the paper.
\end{abstract}
\begin{center}
\keywords{: Structured index-3 differential algebraic equations, sparsity, 
Model order reduction, Iterative Rational Krylov Algorithms.}
\end{center}
\section{Introduction} \label{sec:intro}
In mechanics or multibody dynamics linearized equation of motion with holonomically constraint has the following form \cite{EicF98,morUdd18}
\begin{subequations} \label{eq:INT:systemequation}
\begin{align}
     \M\ddot{\x}(t)+\D \dot{\x}(t)+ \K\x(t)+\G^T \z(t) & = \F \tu(t),
     \label{eq:INT:systemequation1} \\
     \G\x(t)& =0, \label{eq:INT:systemequation2}\\
     \y(t) & = \textbf{L} \x (t),\label{eq:INT:systemequation3}
\end{align}          
\end{subequations} 
where  $\M,\,\K,\,\D \in \mathbb{R}^{n_1 \times n_1}$ are sparse matrices known as 
mass, stiffness and damping matrices respectively. $\x(t) \in \mathbb{R}^{n_1}$,  $\tu(t) \in\mathbb{R}^m$ and  $\y(t) \in \mathbb{R}^q$ are respectively known as states, inputs and outputs vectors. The constraint matrix $\G \in \mathbb{R}^{n_{2} \times n_{1}}$ (with $n_1 < n_2$) is associated with the given algebraic constraints $\z(t)\in\mathbb{R}^{n_2}$. Furthermore, $\F \in \mathbb{R}^{n_1 \times m}$ is the input matrix corresponding to the input vector $\tu(t)$ and $\textbf{L} \in \mathbb{R}^{q \times n_1}$ is  the output matrix associated to the measurement output vector $\y(t)$. Such structured dynamical system also  appear in mechatronics where electrical and mechanical parts are coupled or in the electric circuits \cite{Ria08}. 

If we convert the system into first-order form then it becomes first-order index-3 system \cite{morUdd18}. Therefore the system in (\ref{eq:INT:systemequation}) is called second-order index-3  descriptor system. If the system becomes very large then it is very expensive to simulate, control and optimize. Therefore, we want to reduce the complexity of the model through model order reduction (MOR) \cite{morAnt05, morBen05, morUdd19}. Among different MOR methods \cite{morAnt05, morBen05, morUdd19} the two most frequently applied modern MOR methods are the balanced truncation (BT) \cite{morMoo81} and the rational interpolation of the transfer function by the iterative rational Krylov algorithm (IRKA)~\cite{morGugAB08}. Both approaches have been extended to first-order descriptor systems~\cite{morHeiSS08,morGugSW13}. The balancing based model order reduction of second-order index-3 system (\ref{eq:INT:systemequation}) has been investigated for second-order-to-first-order and second-order-to-second-order reductions in \cite{morUdd18} and \cite{morUdd20} respectively. On the other hand, the authors in \cite{morAhmB14} discussed IRKA for the model reduction of the underlying descriptor system. In order to follow the proposed algorithm one has to convert the system into a first-order form. Besides at each iteration one has to solve a linear system with dimension $2(n_1+n_2)$ which is computationally expensive tasks.  

In this paper we discuss second-order-to-second-order model reduction of second-order index-3 descriptor system via IRKA without converting the system into first-order form. In the literature second-order-to-second-order reduction is called structure preserving model order reduction (SPMOR). IRKA based SPMOR for the standard second-order system was developed by Wyatt in his P.hD., thesis \cite{morWya12}. This idea was generalized for the second-order index-1 system which is slightly different from (\ref{eq:INT:systemequation}) in \cite{morRahUAM20}. Like index-1 system the second-order index-3 system can be converted into standard second-order system. In this case instead of using Schur complement techniques as used in  \cite{morRahUAM20} we apply projection onto hidden manifold. This idea was already found in \cite{morGugSW13,morHeiSS08,morBenSU16} for the firs-order index-2 systems. On the other hand, for the second-order index-3 system the technique was implemented using a balancing based model order reduction. However, there was no investigation of this idea for this system using the IRKA. This paper contributes to close this gap. That is we mainly devote to second-order-to-second-order model reduction of second-order index-3 system using IRKA. Following the procedure in \cite{morGugSW13,morHeiSS08} first we show that the second-order index-3 descriptor system can be projected onto the null space of the constraint matrix which we call hidden manifold to obtain a standard second-order system. Then we can apply the technique as  in \cite{morWya12} to obtain a standard second-order reduced order system. It is shown in the paper that the explicit computation of hidden manifold projector is not required. This is important because creating a hidden manifold projector demands a lot of computational times. Moreover, the projected system is converted into a dense form which yields huge bottleneck in implementing the reduced order model. The proposed method is applied to several models coming from Engineering applications. The performances of the proposed algorithm seems to be promising and the results are better than balancing based techniques in both approximation accuracy and  computational time which appears in numerical results. 

Rest of the article is organized as follows. Section~\ref{sec:btforsecondorder} briefly discuss IRKA based SPMOR of second-order system and reformulation of second-order index-3 system from previous literature which are the main ingredient to obtain the new results of this paper. The main contribution of this paper will be discussed in Section~\ref{sec:hindex:reformofdynamicalsystem}. In this section we developed IRKA based SPMOR of second-order index-3 system.  The subsequent section  illustrates numerical results. At the end, Section~\ref{sec:conclusion} presents the conclusive remarks.
\section{Background}\label{sec:btforsecondorder}
In the following texts at first we briefly discuss IRKA for standard second-order system to obtain second-order reduced order model. Then we will show how to convert the second-order index-3 system into second-order standard system by projecting onto the null space of the constraint matrix. In fact we establish some definitions and notations based on the previous literature that will be in the upcoming sections. 
\subsection{IRKA for second-order system}\label{subsec:itmsecstd}
Structure preserving IRKA (SPIRKA) for a second-order standard system was proposed in \cite{morWya12}. The SPIRKA is mainly based on the IRKA of first-order system which was originally proposed in \cite{morGugAB08}. This prominent algorithm was developed by Gugercin et al., in \cite{morGugAB08} to achieve the $\mathscr{H}_2$-optimal model reduction via interpolatoy projection technique. To explain the SPIRKA let us consider a second-order linear time-invariant (LTI) continuous-time system 
\begin{equation}\label{eq:tisec:systemequation}
\begin{aligned}
     \mathcal{M}\ddot{\xi}(t)+ \mathcal{D} \dot{\xi}(t)+\mathcal{K}\xi(t) & = \mathcal{F} u(t),\\
     y(t) & = \mathcal{L} \xi(t),
\end{aligned}   
\end{equation}
where $\mathcal{M}, \mathcal{D}$ and $\mathcal{K}$ are non-singular, and $\xi(t)$ is the $n$ dimensional state vector. Consider that the system is MIMO and its transfer function is defined by
\begin{align}\label{eq:back:stsectf}
 \mathcal{T}(s)= \mathcal{L} (s^2\mathcal{M}+s\mathcal{D}+\mathcal{K})^{-1}\mathcal{F}; \quad s\in \mathbb{C}.
\end{align}
Our goal is to obtain an $r$ dimensional $(r\ll n)$  reduce order model
\begin{equation}\label{eq:back:reducedsystem}
\begin{aligned}
     \hat{\mathcal{M}}\ddot{\hat{\xi}}(t)+ \hat{\mathcal{D}} \dot{\hat{\xi}}(t)+\hat{\mathcal{K}}\hat{\xi}(t) & = \hat{\mathcal{F}} u(t),\\ 
     \hat{y}(t) & = \hat{\mathcal{L}} \hat{\xi}(t),
\end{aligned}
\end{equation}
where the reduced coefficient matrices are constructed as
\begin{equation}
\begin{aligned}
 \hat{\mathcal{M}} = \mathcal{W}^T \mathcal{M}\mathcal{V},\, 
 \hat{\mathcal{D}} = \mathcal{W}^T \mathcal{D} \mathcal{V},\, 
 \hat{\mathcal{K}} = \mathcal{W}^T \mathcal{K} \mathcal{V},\\
 \hat{\mathcal{F}} = \mathcal{W}^T\mathcal{F},\, 
 \hat{\mathcal{L}} = \mathcal{L}\mathcal{V}, 
\end{aligned}
\end{equation}
and the transfer function of the reduced order model can be defined as
\begin{align}\label{eq:back:rtf}
 \hat{\mathcal{T}}(s)= \hat{\mathcal{L}} (s^2\hat{\mathcal{M}}+s\hat{\mathcal{D}}+\hat{\mathcal{K}})^{-1}\hat{\mathcal{F}}; \quad s\in \mathbb{C}.
\end{align}
According to \cite{morWya12} the procedure of IRKA for second-order system is same as
the first-order system. We want to construct reduced order model (\ref{eq:back:reducedsystem}) in such way that the reduced transfer function (\ref{eq:back:rtf}) interpolate to the original transfer function (\ref{eq:back:stsectf}) at some interpolation points. Moreover the reduced order model satisfies the interpolation conditions mentioned below
\cite{morWya12}.

Given a set of interpolation points $\left\{\alpha_1, \alpha_2, \cdots, \alpha_r\right\}\subset \mathbb{C}$, and sets of left and right tangential directions  $\left\{c_1, c_2, \cdots, c_r\right\}\subset \mathbb{C}^m$ , $\left\{b_1, b_2, \cdots, b_r\right\}\subset \mathbb{C}^p$ are respectively defined by 
\begin{equation}\label{eq:back:left-rightproject}
\begin{aligned}
V & = \left[v_1\mathcal{F}b_1, v_2\mathcal{F}b_2, \cdots,v_r\mathcal{F}b_r\right],\\
W & = \left[w_1\mathcal{L}^Tc_1, w_2\mathcal{L}^Tc_2 \cdots, w_r\mathcal{L}^T c_r\right],
\end{aligned}
\end{equation}
Where $v_i=(\alpha_i^2 \mathcal{M}+\alpha_i\mathcal{D}+\mathcal{K})^{-1}$ and $w_i=(\alpha_i^2 \mathcal{M}^T+\alpha_i\mathcal{D}^T+\mathcal{K}^T)^{-1}$; $i= 1,2,\cdots r$.
If the reduced-order model (\ref{eq:back:reducedsystem}) is constructed by  $V$ and $W$,
the reduced transfer-function (\ref{eq:back:rtf}) tangentially interpolates (\ref{eq:back:stsectf}), satisfies the  interpolation conditions
\begin{equation}\label{eq:back:hermitcond}
\begin{aligned}
\mathcal{T}(\alpha_i)b_i & = \hat{\mathcal{T}}(\alpha_i)b_i,\\
c_i^T\mathcal{T}(\alpha_i)b_i & = c_i^T\hat{\mathcal{T}}(\alpha_i)b_i,\\
c_i^T\mathcal{T}'(\alpha_i)b_i &= c_i^T\hat{\mathcal{T}}'(\alpha_i)b_i,
\end{aligned}
\end{equation}
for $i = 1,2, \dots , r,$ which is known as Hermite bi-tangential interpolation conditions. One of the challenging parts of SPIRKA is to find a set of optimal interpolation points as well as tangential directions since they are not predefined. In \cite{morWya12} author shows several  remedies. Among them this paper consider the following strategy.
Construct 
{\small
\begin{align}\label{eq:back:2ndto1st}
(E,~ A,~ B,~ C):= \left(\begin{bmatrix}I & 0\\ 0 &                                                       \hat{\mathcal{M}}\end{bmatrix},\
                         \begin{bmatrix}0 & I\\                                     -\hat{\mathcal{K}} & -\hat{\mathcal{D}} \end{bmatrix},\
                         \begin{bmatrix} 0 \\  \hat{\mathcal{F}} \end{bmatrix},\
                         \begin{bmatrix} \hat{\mathcal{L}} & 0                          \end{bmatrix} 
                         \right).
\end{align} 
}
Then find $r$ dimensional reduced order model $(\hat{E}, \hat{A}, \hat{B}, \hat{C})$ from $(E, A, B, C)$. The interpolation points and tangential directions for the next iteration step are  constructed from the mirror image of the eigenvalues and the eigenvectors of $(A,E)$. The  reduced order model  $(\hat{E}, \hat{A}, \hat{B}, \hat{C})$ can be constructed again by the IRKA. Note that IRKA of first-order system is presented in \cite[Algorithm 1]{morUdd19}. The whole procedure for SPIRKA is summarized in Algorithm~\ref{alg:irka3}.
\begin{algorithm}[t]
\SetAlgoLined
\SetKwInOut{Input}{Input}
\SetKwInOut{Output}{Output}
\caption{SPIRKA for standard Second-Order Systems.}
\label{alg:irka3}
\Input {$\mathcal{M}, \mathcal{D}, \mathcal{K}, \mathcal{H}, \mathcal{L}$.}
\Output  {$\hat{\mathcal{M}}, \hat{\mathcal{D}}, \hat{\mathcal{K}}, \hat{\mathcal{H}}, \hat{\mathcal{L}}$.}
Consider: interpolation points $\{\alpha_i\}_{i=1}^r\subset \mathbb{C}$, left and right tangential directions $\{b_i\}_{i=1}^r\subset \mathbb{C}^p$ and $\{c_i\}_{i=1}^r\subset \mathbb{C}^m$.\\
Form
\begin{align*}
V & = \left[v_1\mathcal{F}b_1, v_2\mathcal{F}b_2, \cdots,v_r\mathcal{F}b_r\right],\\
W & = \left[w_1\mathcal{L}^Tc_1, w_2\mathcal{L}^Tc_2 \cdots, w_r\mathcal{L}^Tc_r\right],
\end{align*}
Where $v_i=(\alpha_i^2 \mathcal{M}+\alpha_i\mathcal{D}+\mathcal{K})^{-1}$ and $w_i=(\alpha_i^2 \mathcal{M}^T+\alpha_i\mathcal{D}^T+\mathcal{K}^T)^{-1}$; $i= 1,2,\cdots r$.

\While{(not converged)}{%
$
 \hat{\mathcal{M}} = \mathcal{W}^T \mathcal{M}\mathcal{V},\, 
 \hat{\mathcal{D}} = \mathcal{W}^T \mathcal{D} \mathcal{V},\, 
 \hat{\mathcal{K}} = \mathcal{W}^T \mathcal{K} \mathcal{V},\,
 \hat{\mathcal{F}} = \mathcal{W}^T\mathcal{F},\, 
 \hat{\mathcal{L}} = \mathcal{L}\mathcal{V}. 
$\\
Use the first-order representation ($E,A,B,C$) as in (\ref{eq:back:2ndto1st}) find the reduced-order matrices $\hat{E}$, $\hat{A}$, $\hat{B}$ and $\hat{C}$.\\
Compute $\hat{A}z_i=\lambda_i\hat{E}z_i$ and $y^*_i \hat{A}=\lambda_i y^*_i \hat{E}$ for
$\alpha_i \leftarrow -\lambda_i$, $b^*_i \leftarrow -y^*_i \hat{B}$ and $c^*_i \leftarrow \hat{C}z^*_i$ for all $i=1,\cdots,r$\\
Repeat Step~2.\\
$i=i+1$;
}
Construct the reduced matrices by repeating Step~4
\end{algorithm}
%
\subsection{Reformulation of second-order index-3 descriptor system}
\label{sec:hindex:reformofdynamicalsystem}
We already have mentioned in earlier section that projecting the index-3 system (\ref{eq:INT:systemequation}) onto the hidden manifold we can convert the system into an index-0 i.e., second-order standard system like (\ref{eq:tisec:systemequation}). However, such conversion for a large scale dynamical system is practically impossible due to additional complexities. The  idea of conversion is already developed in \cite{morUdd20}. For our convenience, we briefly introduce this in the following. 

Let us consider the projector onto the null-space of $\G$,
\begin{align}\label{eq:indexred:piprojetor}
\Pi:= I-\G^T(\G\M^{-1}\G^T)^{-1}\G\M^{-1},
\end{align}
which satisfies
$\Pi \M=\phantom{1} \M\Pi^T$, 
$\mathrm{Null}(\Pi)= \mathrm{Rang}(\G^T)$, 
$\mathrm{Rang}{(\Pi)}=\mathrm{Null}{(\G\M^{-1})}$
and the most importantly
\begin{align}\label{eq:propertisofPi}
\G\x=0 \quad \iff \quad \Pi^T\x = \x.
\end{align}
Readers are referred to e.g., \cite{morUdd20} to see the details 
of these properties with proofs. Now  applying these identities into (\ref{eq:tisec:systemequation}) we obtain
\begin{subequations}\label{eq:background:piproj}
 \begin{align}
 \Pi \M \Pi^T\ddot{\x}(t) +\Pi \D \Pi^T \dot{\x}(t)+\Pi \K \Pi^T \x(t)& =\Pi \F u(t), 
                                          \label{eq:secondordindex3piprojected2}\\
 \y(t)& = \textbf{L} \Pi^T \x(t) \label{eq:index3piprojected3}.
 \end{align}
\end{subequations}
The dynamical system (\ref{eq:background:piproj}) still has 
unnecessary equations due to the singularity of $\Pi$. Those equations can be avoided 
by splitting $\Pi= \Psi_l \Psi_r$, where $\Psi_l,\ \Psi_r\in \mathbb{R}^{n_1\times (n_1-n_2)}$ and they satisfies   
\begin{align}\label{eq:indexred:projectionproperty}
  \Psi_l^T\Psi_r=I_{n_1-n_2},
\end{align}
where $I_{n_1-n_2}$ is an $(n_1-n_2)\times (n_1-n_2)$ identity matrix.
Inserting the decomposition of $\Pi$ into
(\ref{eq:background:piproj}) and considering 
$\tilde{\x}(t)={\Psi_l}^T \x(t)$, the resulting dynamical system leads to 
\begin{subequations}\label{eq:backg:thetaproj}
 \begin{align}
 \Psi_r^T \M \Psi_r\ddot{\tilde{\x}}(t) +
 \Psi_r^T \D\Psi_r \dot{\tilde{\x}}(t) + \Psi_r^T \K\Psi_r 
 \tilde{\x}(t) & =\Psi_r^T \F u(t), \label{eq:secondordindex3piprojected2}\\
 \y(t) & =\textbf{L} \Psi_r \tilde{\x}(t) \label{eq:index3piprojected3}.
 \end{align}
\end{subequations}
This system is now a standard 
second-order system  as described by (\ref{eq:tisec:systemequation}). In fact system in (\ref{eq:backg:thetaproj}) can be seen as the system (\ref{eq:background:piproj}) with the redundant equations being removed through the $\Psi_r$ projection. Note that the coefficient matrices of (\ref{eq:backg:thetaproj}) are dense if compared to (\ref{eq:INT:systemequation}). Therefore, for the large-scale index-3 system explicit computation of (\ref{eq:backg:thetaproj}) is forbidden. In fact the dynamical systems (\ref{eq:INT:systemequation}), (\ref{eq:background:piproj}) and (\ref{eq:backg:thetaproj}) are equivalent in the sense that they are the different realizations of the same transfer function. Moreover, their finite spectra are the same which has been  proven in the sequel. Once the index-3 system (\ref{eq:INT:systemequation}) is converted into the index-0 system (\ref{eq:backg:thetaproj}) then the SPIRKA i.e., Algorithm~\ref{alg:irka3}  can be  applied  to the converted system. However, as the converted system is dense, the computational costs of Algorithm~\ref{alg:irka3} becomes high. For a very high dimensional index-3 descriptor system computing (\ref{eq:backg:thetaproj}) is not possible due to the restriction of computer memory. Therefore we are motivated to construct reduced order model without forming system (\ref{eq:backg:thetaproj}) explicitly. 

\section{IRKA for second-order index-3 descriptor systems}
\label{sec:hindex:reformofdynamicalsystem}
The SPIRKA introduced in Section~\ref{sec:btforsecondorder} can be applied
to the projected system~(\ref{eq:backg:thetaproj}).
As already mentioned, this is   infeasible for a large-scale system. Therefore, the  technique can be applied to the equivalent system (\ref{eq:background:piproj}) instead.
For this purpose, following the discussion in Section~\ref{sec:btforsecondorder}, 
we can create the right and left projectors as 
\begin{equation}\label{eq:irksecordindex3:left-rightproject}
\begin{aligned}
V & = \left[v_1, v_2, \cdots,v_r \right],\\
W & = \left[w_1, w_2 \cdots, w_r \right],
\end{aligned}
\end{equation}
where
\begin{align*}
v_i & =(\alpha_i^2 \tilde{\M}+\alpha_i\tilde{\D}+\tilde{\K})^{-1}\tilde{\F}b_i, \\
w_i & =(\alpha_1^2 \tilde{\M}^T+\alpha_1\tilde{\D}^T+\tilde{\K}^T)^{-1}\tilde{\textbf{L}}^Tc_i,
\end{align*}
for  $i= 1,2,\cdots r$ and in which 
%
$\tilde{\M} = \Pi \M \Pi^T,\, \tilde{\D} = \Pi \D \Pi^T, \tilde{\K} = \Pi \K \Pi^T, \tilde{\F} = \Pi \F \Pi^T$ and $\tilde{\textbf{L}} = \textbf{L} \Pi^T$. 
The main expensive task here is to compute each vector inside the projectors by solving a linear system. For example to construct $V$, at $i$-th iteration we find $(\alpha_i^2 \tilde{\M}+\alpha_i\tilde{\textbf{L}}+\tilde{\K})^{-1}\tilde{\F}b_i$  to solve the linear system
\begin{align*}
    (\alpha_i^2 \tilde{\M}+\alpha_i\tilde{\D}+\tilde{\K})v_i = \tilde{\F}b_i,
\end{align*}
which implies 
\begin{align}\label{eq:index3:projectedlinersys}
    \Pi(\alpha_i^2 \M+\alpha_i \D+ \K)\Pi^T v_i= \F b_i.
\end{align}
This linear system can be solved efficiently by applying the following Lemma~\label{lem:index3:linearsys}.
\begin{lemma}\label{lem:index3:linearsys}
The matrix $\varkappa$ satisfies $\varkappa=\Pi^T\varkappa$ and 
$\Pi(\alpha^2 \M+\alpha \D+ \K)\Pi^T\varkappa = \Pi \F b$ if and only if
  \begin{align}\label{eq:BC:linearsysinitialguess}
    \begin{bmatrix}
      \alpha^2 \M+\alpha \D+ \K & \G^T\\ \G & 0
    \end{bmatrix}
    \begin{bmatrix}
      \varkappa\\ \Lambda
    \end{bmatrix}=
    \begin{bmatrix}
      \F b \\ 0
    \end{bmatrix}.
  \end{align}  
  \end{lemma}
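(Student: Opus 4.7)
The plan is to prove both directions using only the algebraic properties of $\Pi$ already recorded in Section~\ref{sec:btforsecondorder}, namely $\mathrm{Null}(\Pi)=\mathrm{Range}(\G^T)$ (which immediately gives $\Pi\G^T=0$) and the equivalence $\G\x=0\Longleftrightarrow \Pi^T\x=\x$ stated in~(\ref{eq:propertisofPi}). No further machinery is needed, since the lemma is essentially a null-space/range bookkeeping exercise for the KKT-type saddle-point block system.

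For the $(\Longleftarrow)$ direction, I would start from the block system and read off the second block row $\G\varkappa=0$, which by~(\ref{eq:propertisofPi}) forces $\Pi^T\varkappa=\varkappa$. Then I would left-multiply the first block row by $\Pi$; the contribution of the Lagrange multiplier term vanishes because $\Pi\G^T=0$, yielding $\Pi(\alpha^2\M+\alpha\D+\K)\varkappa = \Pi\F b$. Substituting $\varkappa=\Pi^T\varkappa$ on the left recovers the projected equation in the lemma statement.

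For the $(\Longrightarrow)$ direction, $\Pi^T\varkappa=\varkappa$ gives $\G\varkappa=0$ by~(\ref{eq:propertisofPi}), supplying the second block row of the saddle system. Using $\Pi^T\varkappa=\varkappa$ in the projected equation, one rewrites the hypothesis as
\begin{equation*}
\Pi\bigl[(\alpha^2\M+\alpha\D+\K)\varkappa - \F b\bigr]=0,
\end{equation*}
so the residual $(\alpha^2\M+\alpha\D+\K)\varkappa - \F b$ lies in $\mathrm{Null}(\Pi)=\mathrm{Range}(\G^T)$. Hence there exists a vector $\Lambda$ with $(\alpha^2\M+\alpha\D+\K)\varkappa - \F b = -\G^T\Lambda$, which is exactly the first block row of~(\ref{eq:BC:linearsysinitialguess}).

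The only step that is not completely mechanical is invoking $\mathrm{Null}(\Pi)=\mathrm{Range}(\G^T)$ to extract $\Lambda$ in the $(\Longrightarrow)$ direction; everything else reduces to applying $\Pi$ or $\Pi^T$ and citing the property~(\ref{eq:propertisofPi}). I expect no substantive obstacle, and the value of the lemma is operational rather than analytical: it turns the dense projected linear system~(\ref{eq:index3:projectedlinersys}) into the sparse saddle-point system~(\ref{eq:BC:linearsysinitialguess}), which can be handled by standard sparse direct or iterative solvers without ever forming $\Pi$ explicitly.
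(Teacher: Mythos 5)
Your proposal is correct and follows essentially the same route as the paper's own proof: both directions rest on the equivalence $\G\x=0\iff\Pi^T\x=\x$, the identity $\Pi\G^T=0$, and the extraction of the multiplier $\Lambda$ from $\mathrm{Null}(\Pi)=\mathrm{Range}(\G^T)$. There is nothing substantive to add or correct.
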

  \begin{proof}
  If $\varkappa=\Pi^T\varkappa$, then by using (\ref{eq:propertisofPi}) we have 
  \begin{align}\label{eq:index3:g0}
  G\varkappa= 0
  \end{align}
  which is the second block of equation (\ref{eq:BC:linearsysinitialguess}). Furthermore $\Pi(\alpha^2 \M+\alpha \D+ \K)\Pi^T\varkappa = \Pi \F b$ implies 
  \begin{align*}
  \Pi(\alpha^2 \M+\alpha \D+ \K)\varkappa & = \Pi \F b, \\
  \Pi((\alpha^2 \M+\alpha \D+ \K)\varkappa - \F b) & = 0. 
  \end{align*}
  This means $(\alpha^2 \M+\alpha \D+ \K)\varkappa - \F b$ is in the null space of $\Pi$. We know that  
  $\mathrm{Null}{(\Pi)}=\mathrm{Rang}{(G^T)}$. Therefore, there exists $\Lambda$ such that $(\alpha^2 \M+\alpha \D+ \K)\varkappa - \F b = -G^T\Lambda$ which implies
  \begin{align}\label{eqq:index3:g1}
  (\alpha^2 \M+\alpha \D+ \K)\varkappa +G^T\Lambda = \F b. 
  \end{align}
 Equations (\ref{eq:index3:g0}) and (\ref{eqq:index3:g1}) yield (\ref{eq:BC:linearsysinitialguess}). Conversely, we
  assume (\ref{eq:BC:linearsysinitialguess}) holds. From the second line of (\ref{eq:BC:linearsysinitialguess}) we obtain $G\varkappa = 0$ and thus $\varkappa=\Pi^T\varkappa$. Now from first equation we obtain
  \[
  (\alpha^2 \M+\alpha \D + \K)\varkappa +G^T\Lambda = \F b. 
  \]
  Applying (\ref{eq:propertisofPi}) this equation gives
  \[
   (\alpha^2 \M+\alpha \D+ \K)\Pi^T\varkappa +G^T\Lambda = \F b. 
  \]
 Multiplying both sides by $\Pi$ and since $\Pi G^T=0$ we have 
  \[
   \Pi(\alpha^2 \M+\alpha \D+ \K)\Pi^T\varkappa  = \Pi\F b. 
  \]
  This completes the proof.
\end{proof}
 Following Lemma~\ref{lem:index3:linearsys} instead of solving  (\ref{eq:index3:projectedlinersys}) we can solve  
  \begin{align}\label{eq:BC:linearsysinitialguess}
    \begin{bmatrix}
      \alpha_i^2 \M+\alpha_i \D+ \K & \G^T \\ \G & 0
    \end{bmatrix}
    \begin{bmatrix}
      v_i\\ \Lambda
    \end{bmatrix}=
    \begin{bmatrix}
      \F b_i \\ 0
    \end{bmatrix},
  \end{align}  
  for $\xi$. 
Although this system is larger than its projected system. Therefore we solve this linear system to avoid constructing the projector. Similarly to construct $W$ at each iteration we compute $w_i= (\alpha_i^2\tilde{\M}^T+\alpha_i\tilde{\D}^T+\tilde{\K}^{T})^{-1}\tilde{\textbf{L}}^Tc_i$ by solving the following linear system
\[
  \begin{bmatrix}
  \alpha_i^2\M^T+\alpha_i\D^T+\K^T & \G^T\\ \G & 0
 \end{bmatrix}
\begin{bmatrix}
 w_i\\ \Gamma
\end{bmatrix}=
\begin{bmatrix}
 \textbf{L}^Tc_i \\0
\end{bmatrix}.
\]
Once we have $V$ and $W$, apply them to (\ref{eq:background:piproj}) to find the reduce order model
\begin{subequations}
\label{eq:index3:rom}
 \begin{align}
 \hat{\M} \ddot{\hat{\x}}(t) + \hat{\D} \dot{\hat{\x}}(t)+ \hat{\K}  \hat{\x}(t)& = \hat{\F} u(t), 
                                          \label{eq:secondordindex3piprojected2}\\
 \hat{\y}(t)& = \hat{\textbf{L}}  \hat{\x}(t) \label{eq:index3piprojected3},
 \end{align}
\end{subequations}
in which the reduced matrices are constructed as follows
\begin{align*}
\hat{\M} & = W^T\Pi \M \Pi^T V,\quad \hat{\D} = W^T\Pi \D \Pi^T V,\\
\hat{\K} & = W^T\Pi \K \Pi^T V,\quad \hat{\F} = W^T\Pi \F,\quad \hat{\textbf{L}} =  \textbf{L} \Pi^T V.
\end{align*}
Due to the properties of the projector, as mentioned in (\ref{eq:propertisofPi}) we have
$\Pi^T V= V$ and $\Pi^T W= W$ or $(\Pi^TW)^T = W^T$. Therefore the reduced matrices can be constructed without using $\Pi$ as follows
\begin{align*}
\hat{\M}  = W^T \M  V,\quad \hat{\D} = W^T \D  V,\quad
\hat{\K}  = W^T \K  V,\\ \hat{\F} = W^T \F,\quad \hat{\textbf{L}} =  \textbf{L}  V.
\end{align*}
The whole procedure to construct reduced order model from second-order index-3 system (\ref{eq:INT:systemequation}) is summarize in Algorithm~\ref{alg:daes:irka2}.
\begin{algorithm}[t]
	\SetAlgoLined
	\SetKwInOut{Input}{Input}
	\SetKwInOut{Output}{Output}
	\caption{IRKA for Second-Order Index-3 Descriptor Systems.}
	\label{alg:daes:irka2}
	\Input {$\M, \D, \K, \F, \textbf{L}$.}
	\Output  {$\hat{\M}, \hat{\D}, \hat{\K}, \hat{\F}, \hat{\textbf{L}}$}
	Select randomly a set  of the interpolation points $\{\alpha_i\}_{i=1}^r$ and the tangential directions $\{b_i\}_{i=1}^r$ and $\{c_i\}_{i=1}^r$.\\
	Construct the  projection matrices
	$$V_s = \left[v_1, v_2, \cdots, v_r\right] \quad \text{and}
	\quad W_s = \left[w_1, w_2, \cdots, w_r\right],$$ 
	where $v_i$ \& $w_i$; $i=1,\cdots,r$ are the solutions of the linear systems
	\[
    \begin{bmatrix}
      \alpha_i^2 \M+\alpha_i \D+ \K & \G^T \\ \G & 0
    \end{bmatrix}
    \begin{bmatrix}
      v_i\\ \Lambda
    \end{bmatrix}=
    \begin{bmatrix}
      \F b_i \\ 0
    \end{bmatrix}
  \] 
 and  
\[
  \begin{bmatrix}
  \alpha_i^2\M^T+\alpha_i\D^T+\K^T & \G^T\\ \G & 0
 \end{bmatrix}
\begin{bmatrix}
 w_i \\ \Gamma
\end{bmatrix}=
\begin{bmatrix}
 \textbf{L}^Tc_i \\0
\end{bmatrix},
\] 
respectively.\\
\While{(not converged)}{%
Construct
\begin{align*}
\hat{\M}  = W^T \M  V,\quad \hat{\D} = W^T \D  V,\quad
\hat{\K}  = W^T \K  V,\quad
\hat{\F} = W^T \F,\quad \hat{\textbf{L}} =  \textbf{L}  V.
\end{align*}
	Compute $(E, A, B, C)$ as in (\ref{eq:daes:1storderreducedmodel}), then using 
	MATLAB function \texttt{balred} compute $r$ dimensions model $(\hat{E}, \hat{A}, \hat{B}, \hat{C})$. \\ 
	Compute $\hat{A}z_i = \lambda_i\hat{E}z_i$ and $y^*_i \hat{A} = \lambda_i y^*_i \hat{E}$ for $\alpha_i \leftarrow -\lambda_i$, $b^*_i \leftarrow -y^*_i \hat{B}$ and $c^*_i \leftarrow \hat{C}z^*_i$. \\
	Repeat Step~2. \\	
	$i=i+1$. \\}
Finally construct the reduced-order matrices
\begin{align*}
\hat{\M}  = W^T \M  V,\quad \hat{\D} = W^T \D  V,\quad
\hat{\K}  = W^T \K  V,\quad \hat{\F} = W^T \F,\quad \hat{\textbf{L}} =  \textbf{L}  V.
\end{align*}
\end{algorithm}
\paragraph{Update interpolation points and tangential directions.} In IRKA we need to update the interpolation points and tangential direction at each iteration steps which is often a challenging task. Usually, the interpolation points and tangential direction are updated  by using  mirror image of the eigenvalues and eigenvector of the reduced order model.  In SPIRKA this task is complicated because we need to solve the quadratic eigenvalue problems. If we solve a quadratic eigenvalue problem \cite{TisM01}  using ($\hat{\M}, \hat{\D}, \hat{\K}$) we obtain $2r$ number of eigenvalues and eigenvectors. Selecting $r$ number of optimal interpolation points and corresponding tangential direction is challenging task. To resolve this complexity \cite{morWya12} propose a techniques which is discussed in Algorithm~\ref{alg:daes:irka2}. To follow this idea, we need to apply standard IRKA onto the converted first order system from the reduced second order system. However, this is again an iterative method which is computationally expensive. In this paper to construct the interpolation points and tangential directions we construct 
\begin{equation}\label{eq:daes:1storderreducedmodel}
\begin{aligned}
E:= \begin{bmatrix}
   0 & I\\
   0 & \hat{\M}
 \end{bmatrix}, \quad 
A:= \begin{bmatrix}
      0 &  I\\
      -\hat{\K} &  -\hat{\D}
    \end{bmatrix}, \\ 
B:= \begin{bmatrix}
        0\\ \hat{\F} 
    \end{bmatrix}\quad \text{and}\quad
C:= \begin{bmatrix}
        \hat{\textbf{L}} & 0 
    \end{bmatrix}.
\end{aligned}
\end{equation}
Then we apply MATLAB function \texttt{balred} to compute $r$ dimensional reduced-order model from ($\hat{E}, \hat{A}, \hat{B}, \hat{C}$). The interpolation points are updated by choosing  the mirror images of the eigenvalues of the pair $(\hat{A},\hat{E})$ as the next interpolations points. This seems to be more efficient than the existing one.
\section{Numerical results}
 \label{sec:numerical-results}
\newlength\figwidth
\setlength{\figwidth}{.35\linewidth}
\newlength\figheight
\setlength{\figheight}{.5\linewidth}
\pgfplotsset{every axis plot/.append style={line width=2pt}}
\tikzset{mark options={solid,mark size=2,line width=.5pt,mark repeat=20}}
\tikzstyle{decision} = [diamond, draw, fill=blue!20, 
    text width=4.5em, text badly centered, node distance=3cm, inner sep=0pt]
To asses the efficiency of the proposed algorithm, i.,e., Algorithm~\ref{alg:daes:irka2} we have applied this to two sets of data. First one is coming from a damped spring-mass system (DSMS) with holonomic constraint which is taken from \cite{morMehS05}.
The second set of data set is a constrain triple chain oscillator model (TCOM). This data is originated in \cite{TruV07} but with the index-3 setup  described in \cite{morUdd18}. The details of the models is also available in \cite{morUdd20}. We intentionally have considered the same data for the both model examples as in  \cite{morUdd20} since we want to compare the results of this paper with that one.  The dimension of the models including the number of differential and algebraic equations and inputs/outputs are displayed in Table~\ref{tab:hindex:noofshift}.
\begin{table}[tb]
  \begin{center}
    \begin{tabular*}{\hsize}{@{\extracolsep{\fill}}lcrcr}
      \hline
      models   & dimension  & $n_1$ and $n_2$ & inputs/outputs\\
      \hline
     DSMS    & 2200   & 2000 and  200  & 1/3\\
      
      TCOM   & 11001   & 6001 and 5000  & 1/1 \\
    \hline
    \end{tabular*}
  \end{center}
  \caption{The dimension of the tested models including number of differential and algebraic variables and, inputs and outputs.} 
  \label{tab:hindex:noofshift}
\end{table}

This experiment was carried out with MATLAB\textsuperscript{\textregistered} R2015a (8.5.0.197613)  on a board with 4$\times$INTEL{}$\text{Core}^{\text{TM}}$i5-4460s 
CPU with a 2.90~GHz clock speed and 16~GB RAM.

We apply Algorithm~\ref{alg:daes:irka2} to the both models and find 30 dimensional reduced-order models. Figures~\ref{fig:numresult:smds} and \ref{fig:numresult:tcom} show the frequency domain analysis of the original and reduced models for the DSMS and TCOM, respectively. The frequency responses of the full and the reduced-order models and their absolute and relative errors for the DSMS  are shown  in Figure~\ref{fig:numresult:smds} over the frequency interval $[10^{-2},10^0]$. Sub-figure~\ref{fig:numresult:smds1} shows that the frequency responses of the reduced-order models are matching with the original model with good accuracy. The absolute and relative errors between full and the reduced-order models are shown in Sub-figures \ref{fig:numresult:smds2} and \ref{fig:numresult:smds3}, respectively. 
\begin{figure}[t]
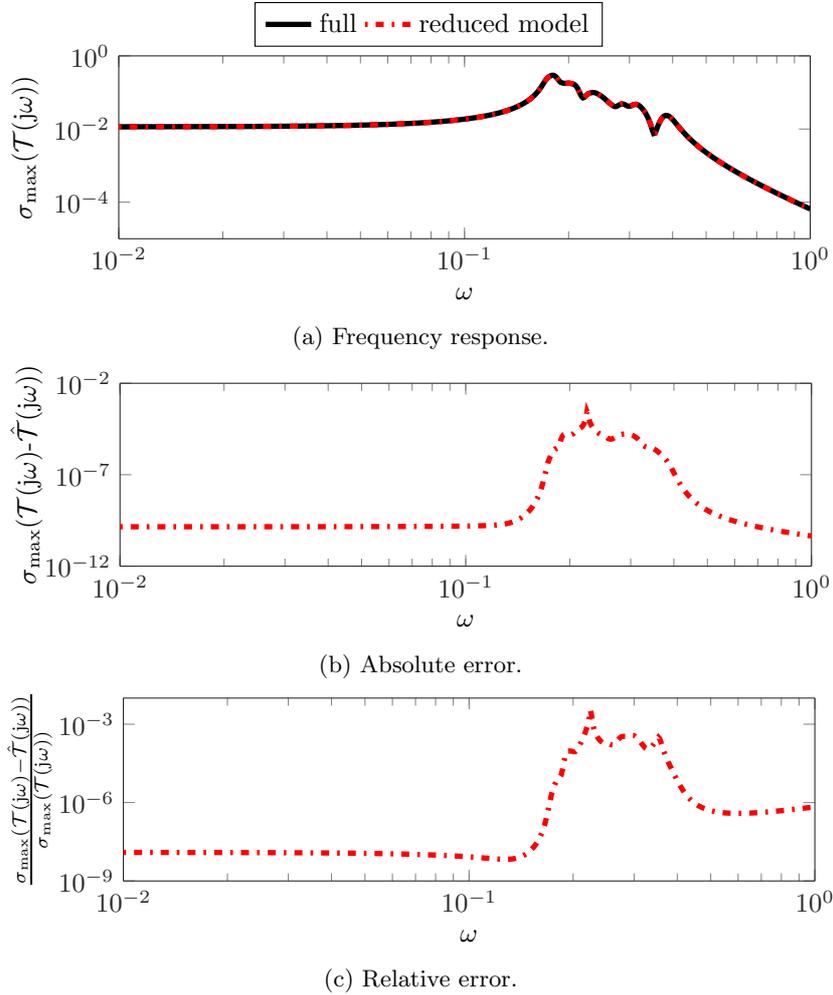

 \setlength{\figheight}{.2\linewidth}
  \begin{subfigure}{\linewidth}
    \centering
    \setlength{\figwidth}{.75\linewidth}
      \input{figures/tf_smd_50}
    \caption{Frequency response.}
    \label{fig:numresult:smds1}
  \end{subfigure}
   \begin{subfigure}{\linewidth}
    \centering
    \setlength{\figwidth}{.75\linewidth}
      \input{figures/abserr_smd_50}
    \caption{Absolute error.}
    \label{fig:numresult:smds2}
  \end{subfigure}
   \begin{subfigure}{\linewidth}
    \centering
    \setlength{\figwidth}{.75\linewidth}
      \input{figures/relatverr_smd_50}
    \caption{Relative error.}
    \label{fig:numresult:smds3}
  \end{subfigure}
  \caption{Comparison of original and the 30  dimensional reduced models for the DSMS.}
  \label{fig:numresult:smds}
\end{figure}
On the other hand, Figure~\ref{fig:numresult:tcom} depicts the frequency responses, absolute and relative errors of the full and the reduced-order models of the TCOM  over the frequency interval $[10^{-3},10^0]$. This figure also shows (in Sub-figure~\ref{fig:numresult:tcom1}) that the frequency responses of the reduced-order models are matching correctly with the original model. Sub-figures~\ref{fig:numresult:tcom2} and \ref{fig:numresult:tcom3} shows a good approximation between the original and reduced-order models using the absolute and relative errors, respectively. 
\begin{figure}[t]
 \setlength{\figheight}{.2\linewidth}
  \begin{subfigure}{\linewidth}
    \centering
    \setlength{\figwidth}{.75\linewidth}
    \input{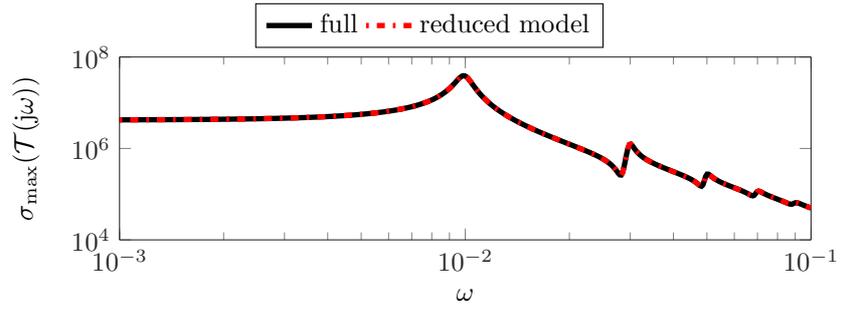}
    \caption{Frequency response.}
    \label{fig:numresult:tcom1}
  \end{subfigure}
  \begin{subfigure}{\linewidth}
    \centering
    \setlength{\figwidth}{.75\linewidth}
   \input{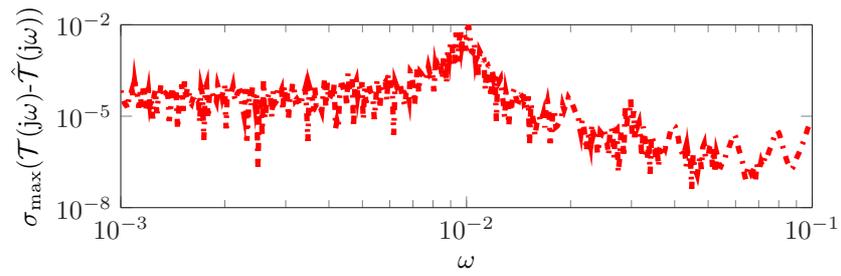}
    \caption{Absolute error.}
    \label{fig:numresult:tcom2}
  \end{subfigure}
  \begin{subfigure}{\linewidth}
    \centering
    \setlength{\figwidth}{.75\linewidth}
    \input{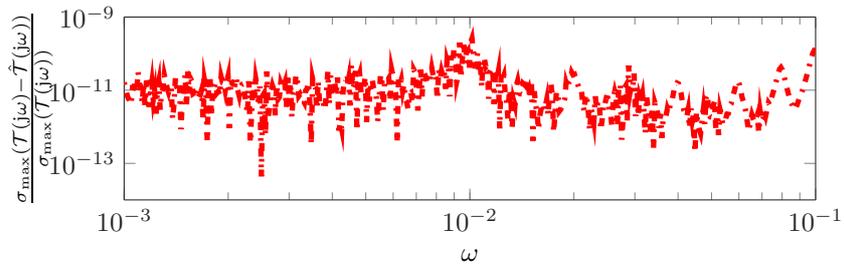}
    \caption{Relative error.}
    \label{fig:numresult:tcom3}
  \end{subfigure}
  \caption{Comparison of the original and 30 dimensional reduced models for the TCOM.}
  \label{fig:numresult:tcom}
\end{figure}
\paragraph{Comparisons of Balanced truncation and IRKA.} To compare the performance of IRKA and balanced truncation we compute $30$ dimensional reduced-order model applying \cite[Algorithm 2]{morUdd20} to the TCOM. This algorithm can compute several reduced-order models based on different balancing criterion. Here we consider the velocity-velocity balancing label which gives the best approximation. Figure \ref{fig:numresult:combtirka} shows the approximation errors  of 30 dimensional reduced-order models computed by the BT and IRKA. From Figure~\ref{fig:numresult:combtirka} it seems that the performance of IRKA is better than the BT.  Both the absolute error and relative errors as shown in Figures  \ref{fig:numresult:combtirka1} and \ref{fig:numresult:combtirka2}, respectively, IRKA depicts better accuracy than the balanced truncation. 
\begin{figure}[t]
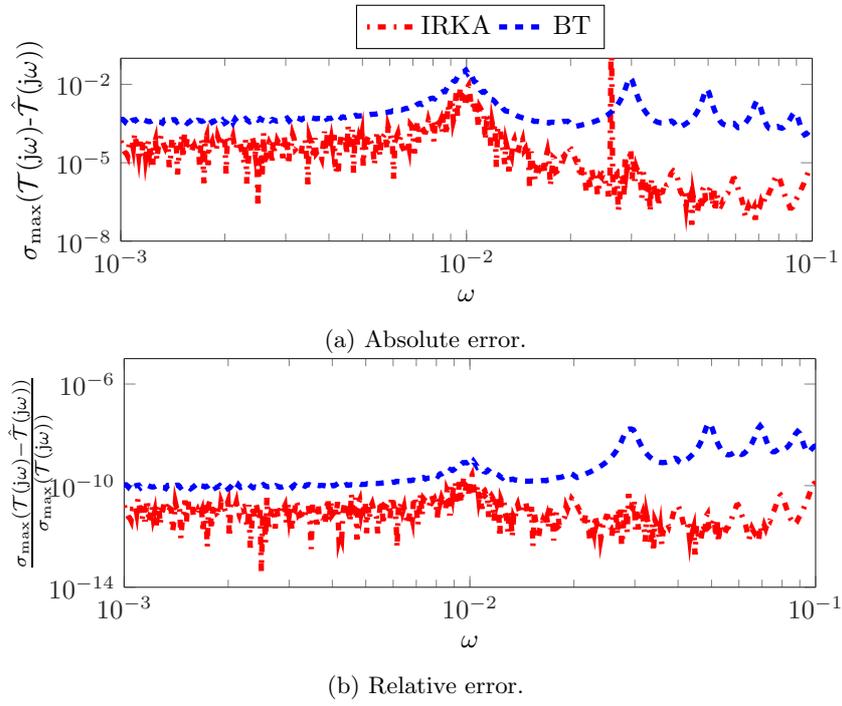

\setlength{\figheight}{.2\linewidth}
  \begin{subfigure}{\linewidth}
    \centering
    \setlength{\figwidth}{.75\linewidth}
   \input{figures/abser_bt_irka30}
    \caption{Absolute error.}
    \label{fig:numresult:combtirka1}
  \end{subfigure}
  \setlength{\figheight}{.25\linewidth}
  \begin{subfigure}{\linewidth}
    \centering
    \setlength{\figwidth}{.75\linewidth}
    \input{figures/relatverr_bt_irka30}
    \caption{Relative error.}
    \label{fig:numresult:combtirka2}
  \end{subfigure}
  \caption{Comparison of the original and 30 dimensional reduced models computed by IRKA and balanced truncation for the TCOM.}
  \label{fig:numresult:combtirka}
\end{figure}
On the other hand, when we consider the computation time, again the performance of IRKA is far better than the BT which is reflected in Figure~\ref{fig:numresult:tcom}. We know that balanced truncation is expensive method since it requires to solve two continuous-time algebraic  Lyapunov equations. The solution of the Lyapunov equations involved the computation of shift parameters which is a computational. We have solved the Lyapunov equations  by \cite[Algorithm 3]{morUdd20} using adaptive shift parameters. See, e.g., \cite{morUdd20} for details. Note that the computational time of IRKA is increasing if the dimension of reduced order model and the number of iterations are increased gradually. 
\begin{figure}[t]
  \includegraphics[width=12cm,height= 8cm]{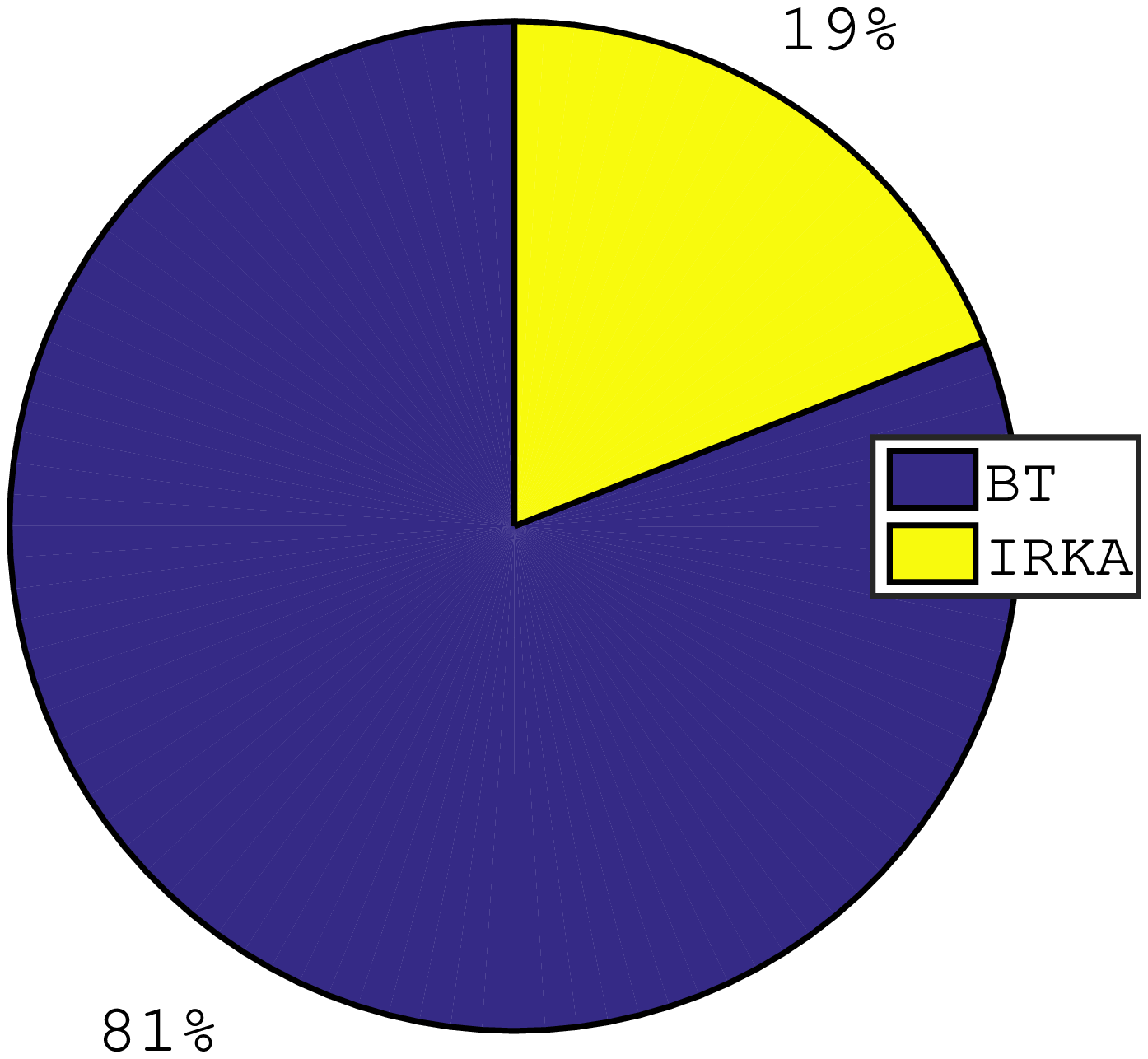}
  \caption{Time comparisons of both balanced truncation and IRKA for the TCOM.}
  \label{fig:numresult:tcom}
\end{figure}
\section{Conclusions}\label{sec:conclusion}
In this paper we have discussed a IRKA based technique to find a 
reduced second-order system from a large-scale sparse second-order index-3 system. In particular,  we have linearized equation of motion with holonomic constrains which arise  in constrained mechanics or multibody  dynamics. It has been shown that the index-3 system can be converted into index-0 by projecting onto the hidden manifold to apply the standard second-order IRKA. But creating projector is often computationally expensive task and it yields system matrices dense. Therefore we have modified the standard IRKA for the underlying index-3 descriptor system. We also have shown a clever techniques to compute the interpolation points and tangential directions. The proposed algorithm was applied to several data of second-order index-3 models.  Numerical results showed that the proposed algorithm can generated lower dimensional model with higher accuracy. The IRKA based method is better than Balanced truncation in terms of accuracy and computational complexity as well.
\section{Acknowledgment}
This research work was funded by NSU-CTRG research grant under the project No.: CTRG-19/SEPS/05. It was also supported by National Natural Science Foundation of China under Grant No. (61873336, 61873335), the Fundamental Research Funds for the Central Universities under Grant (FRF-BD-19-002A), and the High-end foreign expert program of Shanghai University, 
 
\clearpage
\label{sec:numeric}
\bibliographystyle{IEEEtran}      
\bibliography{morbook}
%

\end{document}